\theoremstyle{plain}
\newtheorem{lem}{Lemma}[section]
\newtheorem{cor}[lem]{Corollary}
\newtheorem{prop}[lem]{Proposition}
\newtheorem{thm}[lem]{Theorem}
\newtheorem{intthm}{Theorem}
\theoremstyle{definition}
\newtheorem{defn}[lem]{Definition}
\newtheorem{disc}[lem]{Remark}
\newtheorem{fact}[lem]{Fact}
\newtheorem{para}[lem]{}
\newtheorem*{convention*}{Convention}
\newcommand{\catd}{\mathcal{D}}
\newcommand{\pd}{\operatorname{pd}}
\newcommand{\id}{\operatorname{id}}
\newcommand{\depth}{\operatorname{depth}}
\newcommand{\edim}{\operatorname{edim}}
\newcommand{\ann}{\operatorname{Ann}}
\newcommand{\len}{\operatorname{len}}
\newcommand{\HH}{\operatorname{H}}
\newcommand{\s}{\mathfrak{S}}
\newcommand{\im}{\operatorname{Im}}
\newcommand{\shift}{\mathsf{\Sigma}}
\newcommand{\Ker}{\operatorname{Ker}}
\newcommand{\ideal}[1]{\mathfrak{#1}}
\newcommand{\m}{\ideal{m}}
\newcommand{\q}{\ideal{q}}
\newcommand{\comp}[1]{\widehat{#1}}
\newcommand{\wti}{\widetilde}
\newcommand{\bbz}{\mathbb{Z}}
\newcommand{\xra}{\xrightarrow}
\newcommand{\res}{\xra{\simeq}}
\renewcommand{\geq}{\geqslant}
\renewcommand{\leq}{\leqslant}
\renewcommand{\ker}{\Ker}
\renewcommand{\hom}{\Hom}
\newcommand{\Ext}[4][R]{\operatorname{Ext}_{#1}^{#2}(#3,#4)}	
\newcommand{\Rhom}[3][R]{\mathbf{R}\!\operatorname{Hom}_{#1}(#2,#3)}	
\newcommand{\Lotimes}[3][R]{#2\otimes^{\mathbf{L}}_{#1}#3}
\newcommand{\Otimes}[3][R]{#2\otimes_{#1}#3}
\newcommand{\Hom}[3][R]{\operatorname{Hom}_{#1}(#2,#3)}	
\newcommand{\Tor}[4][R]{\operatorname{Tor}^{#1}_{#2}(#3,#4)}
\newcommand{\spn}{\operatorname{Span}}
\newcommand{\ecodepth}{\operatorname{ecodepth}}
\numberwithin{equation}{lem}
\begin{document}

\bibliographystyle{amsplain}

\title[semidualizing complexes over Local rings of ecodepth $\leq 3$]{Local rings of embedding codepth at most 3 have only trivial semidualizing complexes}

\author{Saeed Nasseh}

\address{Department of Mathematics,
University of Nebraska--Lincoln,
Lincoln, NE  68588-0130,
USA}

\email{snasseh2@unl.edu}
\urladdr{http://www.math.unl.edu/~{}snasseh2/}

\author{Sean Sather-Wagstaff}

\address{Department of Mathematics,
North Dakota State University Dept \# 2750,
PO Box 6050,
Fargo, ND 58108-6050
USA}

\email{sean.sather-wagstaff@ndsu.edu}
\urladdr{http://www.ndsu.edu/pubweb/\~{}ssatherw/}

\thanks{Sather-Wagstaff  was supported in part by a grant from the NSA}

\date{\today}


\keywords{DG algebras, DG modules, semidualizing complexes}
\subjclass[2010]{Primary: 13D02, 13D09; Secondary: 13D05, 13E10}

\begin{abstract}
We prove that a local ring $R$ of embedding codepth at most 3 has 
at most two semidualizing complexes up to shift-isomorphism, namely, $R$ itself and a dualizing $R$-complex if one exists.
\end{abstract}

\maketitle


\section{Introduction} \label{sec0}

\begin{convention*}\label{conv110211a}
In this paper, $R$ is a commutative noetherian 
ring. 
In this section, assume that $(R,\m,k)$ is local.
\end{convention*}

A ``semidualizing'' $R$-complex is a homologically finite $R$-complex $X$ such that
the natural morphism $R\to\Rhom XX$ is an isomorphism in the derived category $\catd(R)$.
In particular, if $X$ is a finitely generated $R$-module, then it is semidualizing if $\Hom XX\cong R$
and $\Ext{\geq 1}XX=0$. These notions were introduced by Foxby~\cite{foxby:gmarm} and Christensen~\cite{christensen:scatac}
and, as special cases, recover Grothendieck's dualizing complexes. For some indications of their usefulness,
see, e.g., \cite{avramov:rhafgd, beck:sgidgca, sather:cidfc, sather:bnsc}.

In~\cite{nasseh:lrfsdc} we show that $R$ has only finitely many semidualizing complexes up to shift-isomorphism, 
answering a question of Vasconcelos~\cite{vasconcelos:dtmc}. The next natural question is:
how many semidualizing complexes does a given ring have up to shift-isomorphism? 
Progress on this question is limited, see, e.g., \cite{celikbas:ncdfcmlr, sather:divisor, sather:lbnsc}.
As further progress, the main result of this paper is the following, which we prove in~\ref{proof131230a}.
In the statement, the \emph{embedding codepth} of $R$ is
$\ecodepth(R)=\edim(R)-\depth(R)$.

\begin{intthm}\label{cor130816a}
Let $R$ be a local ring that  is  Golod or such that $\ecodepth(R)\leq 3$.
Then $R$ has at most two distinct semidualizing complexes up to shift isomorphism,
namely, $R$ itself and a dualizing $R$-complex if one exists.
\end{intthm}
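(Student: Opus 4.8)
The plan is to reduce to a complete local ring and then convert the counting problem into a statement about Poincaré and Bass series, which the special structure of the Golod and embedding-codepth-$\leq 3$ cases makes tractable. First I would replace $R$ by its $\m$-adic completion. Faithful flatness of $R\to\comp R$ makes completion injective on shift-isomorphism classes of semidualizing complexes (as $C\simeq C'$ if and only if $\comp C\simeq\comp{C'}$), every semidualizing $\comp R$-complex is extended from $R$, and both the Golod property and $\ecodepth$ are preserved. Thus I may assume $R$ is complete, so $R=Q/I$ with $(Q,\fn,k)$ regular local and $I\subseteq\fn^2$ and $\edim R=\dim Q$; in particular $R$ has a dualizing complex $D$. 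It then suffices to prove that a semidualizing complex $C$ with $C\not\simeq\shift^iR$ for all $i$ must satisfy $C\simeq\shift^jD$ for some $j$.

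The numerical engine is the factorization identity $I^R_R(t)=P^R_C(t)\,I^R_C(t)$ in $\bbz[[t]]$, where $P^R_C(t)=\sum_i\betti{i}{C}t^i$ and $I^R_C(t)=\sum_i\bass{i}{C}t^i$; it follows by applying $\Rhom k{-}$ to $\Rhom CC\simeq R$ and decomposing $C\lotimes k$ into shifts of $k$. Combined with the facts that $C^{\dagger}:=\Rhom CD$ is again semidualizing with $C^{\dagger\dagger}\simeq C$ and with $P^R_{C^{\dagger}}=I^R_C$, $I^R_{C^{\dagger}}=P^R_C$ up to the shift determined by $D$, the goal becomes: show the factor $P^R_C(t)$ can only be $1$ or $P^R_D(t)$. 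The former gives $\pd_R C<\infty$, hence $C\simeq\shift^iR$, and the latter gives $\id_R C<\infty$, hence $C$ is dualizing and $C\simeq\shift^jD$.

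For the Golod case I would exploit the explicit rational form of these series. Over a Golod ring each relevant series is rational with denominator the single polynomial $d(t)=1-\sum_{i\geq1}\dim_k\HH_i(K^R)\,t^{i+1}$, where $K^R$ is the Koszul complex on a minimal generating set of $\m$. In $I^R_R=P^R_C\cdot I^R_C$ both factors have nonnegative coefficients with $P^R_C(0)=1$, and I would argue that $d(t)$ cannot split nontrivially between them, so one factor is a polynomial. A polynomial $P^R_C$ forces $\pd_R C<\infty$ and a polynomial $I^R_C$ forces $\id_R C<\infty$, giving $C\simeq\shift^iR$ or $C\simeq\shift^jD$ as required.

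For $\ecodepth R\leq3$ the series alone are too coarse, so I would retain the multiplicative structure by working over $K^R$, whose homology is the graded-commutative Tor algebra $A=\tor^Q(R,k)$, concentrated in degrees $0,\dots,\ecodepth R\leq3$ with $A_0=k$. Using the correspondence between semidualizing $R$-complexes and semidualizing DG $K^R$-modules, the problem reduces to finite-dimensional graded $A$-modules carrying a self-$\rhom$ isomorphism, and I would invoke the Avramov--Kustin--Miller classification of $A$ into the types $\mathbf{B}$, $\mathbf{C}(3)$, $\mathbf{G}(r)$, $\mathbf{H}(p,q)$, $\mathbf{T}$, and Golod: the complete intersection and Gorenstein subcases are immediate ($C\simeq\shift^iR$, with $D\simeq\shift^jR$ when $R$ is Gorenstein), and for each remaining type the explicit products and vanishing Massey products on $A$ should force the homology of $C$ to be rank-one free or the dualizing module. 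The step I expect to be the main obstacle is precisely this last one: pinning down semidualizing DG $K^R$-modules from the higher multiplicative data of $A$ in the delicate classes, notably the Golod-adjacent type $\mathbf{G}(r)$ and cases with small Massey products, where one must exclude exotic semidualizing modules invisible to the Poincaré and Bass series; the leverage is that in every codepth-$\leq3$ type the homotopy Lie algebra is rigid enough that the reflexivity isomorphism $\Rhom CC\simeq R$ leaves only the two expected possibilities for $C$.
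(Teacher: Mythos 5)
Your reduction steps (complete $R$, pass to the Koszul complex, then to a classified finite-dimensional algebra) parallel the paper's, but both places where the real work happens are missing, and the one argument you do sketch fails as stated. In the Golod case, from $I^R_R(t)=P^R_C(t)\,I^R_C(t)$ and Lescot-type rationality you obtain a polynomial identity $r(t)\,d(t)=p(t)\,q(t)$, where $P^R_C=p/d$, $I^R_C=q/d$, $I^R_R=r/d$; your conclusion that one factor is a polynomial amounts to $d\mid p$ or $d\mid q$, which would follow if $d(t)$ were irreducible, but it usually is not: already for $R=k[x]/(x^2)$ one has $d(t)=1-t^2=(1-t)(1+t)$. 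Nonnegativity of the coefficients of $p$, $q$, $r$ does not exclude a nontrivial splitting of $d$ between the two factors, so ``cannot split nontrivially'' is exactly the unproven step, and numerical identities of Poincar\'e and Bass series are genuinely weaker than the structural statement you need. The known proofs of the Golod case instead use a rigidity theorem: either Jorgensen's Tor-rigidity over Golod rings \cite[Theorem 3.1]{jorgensen:gabf}, or, as in the paper, a Tor-vanishing theorem over DG trivial extensions $B\ltimes\shift^nk$ (a DG analogue of \cite{nasseh:oeire}) applied to the isomorphism $\Lotimes[A]{X}{\Rhom[A]{X}{D^A}}\simeq D^A$, which forces $X$ or $\Rhom[A]X{D^A}$ to have finite projective dimension, hence $X\sim A$ or $X\sim D^A$.

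For $\ecodepth(R)\leq 3$ you explicitly defer the decisive step (``should force,'' ``rigid enough''), so nothing is proved there; and what closes this case in the paper is not rigidity of the homotopy Lie algebra but a concrete mechanism built on trivial extensions: (i) in every class of the classification other than $\mathbf{C}(c)$ the algebra $A$ is a trivial extension $B\ltimes W$ by a positively graded $k$-vector space $W$; (ii) when $W\neq 0$ the trivial-extension rigidity theorem above yields $|\s(A)|\leq 2$ at once, and an Euler-characteristic count ($\sum_i(-1)^i\len_k(A_i)=0$ when $R$ is not regular) shows $W\neq 0$ holds automatically in classes $\mathbf{T}$ and $\mathbf{B}$; (iii) in class $\mathbf{G}(r)$ with $W=0$, the algebra $A\cong C\ltimes\Hom[k]{C}{\shift^3k}$ is a Gorenstein DG algebra with $\id_A(A)<\infty$, whence $|\s(A)|=1$; (iv) in class $\mathbf{H}(p,q)$ with $W=0$, the map $X\mapsto\Lotimes[C]BX$ is a bijection $\s(C)\to\s(B)$ by the lifting results of \cite{nasseh:ldgm}. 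None of these tools appears in your outline, and without a rigidity input of this kind the classification alone does not pin down the semidualizing DG modules. Two smaller points: your claim that every semidualizing $\comp R$-complex is extended from $R$ is not known to be true (and is not needed, since injectivity of $\s(R)\hookrightarrow\s(\comp R)$ together with flat descent of the dualizing property suffices), and your passage from DG $K^R$-modules to graded modules over the Tor algebra silently uses formality of $K^R$ in this codepth range, i.e., that $K^R$ is linked by DG algebra quasiisomorphisms to an algebra $A$ with $\partial^A=0$ \cite{avramov:cslrec3}.
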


The proof uses differential graded algebra techniques, as pioneered by Avramov and his collaborators.
It is worth noting that 
one can prove the Golod case of this result using a result of Jorgensen~\cite[Theorem 3.1]{jorgensen:gabf}.
However, our approach is different and addresses both cases simultaneously.

\subsection*{Summary}
Section~\ref{sec131230a} consists of foundational material about semidualizing objects in the DG setting.
Section~\ref{sec130815b} contains versions of results from~\cite{nasseh:oeire} for trivial extensions of DG algebras,
including Theorem~\ref{thm130816b} which is key for our proof of Theorem~\ref{cor130816a} and may be of independent interest.
Section~\ref{sec131230b} is mainly concerned with the proof of Theorem~\ref{cor130816a}.

\subsection*{Background}
We assume that the reader is familiar with many notions from the world of 
differential graded (DG) algebra. References on the subject include~\cite{apassov:hddgr, avramov:ifr, avramov:lgh, 
avramov:dgha, beck:sgidgca, felix:rht, frankild:ddgmgdga, nasseh:lrfsdc, nasseh:ldgm}. 
We most closely follow the conventions from~\cite{nasseh:lrfsdc}. 
For the reader's convenience, we specify some terminology and  notation.

Complexes of $R$-modules are indexed homologically. This includes DG algebras and DG modules. 
Also, our DG algebras are all non-negatively graded. 
Given a DG $R$-algebra $A$, the underlying graded algebra for $A$ is denoted $A^\natural$,
and $A$ is  \emph{homologically degree-wise noetherian}
if $\HH_0(A)$ is noetherian and the $\HH_0(A)$-module $\HH_i(A)$ is  finitely generated for all $i\geq 0$.
When $(R,\m)$ is local,
we say that $A$ is  \emph{local} if it is homologically degree-wise noetherian and
the ring $\HH_0(A)$ is a local $R$-algebra; in this case, the ``augmentation ideal'' of $A$ is denoted $\m_A$.

The derived category of DG $A$-modules is denoted $\catd(A)$,
and $\Lotimes[A]--$ and $\Rhom[A]--$ are the derived functors of $\Otimes[A]--$ and $\Hom[A]--$.
Given an integer $i$ and DG $A$-modules $X$ and $Y$, 
we set $\Tor[A]iXY:=\HH_i(\Lotimes[A]XY)$, and we let $\shift^iX$ denote the $i$th \emph{shift}
(or \emph{suspension}) of $X$.
Isomorphisms in $\catd(A)$ are identified by the symbol $\simeq$, and the DG modules $X$ and $Y$ are ``shift-isomorphic'',
denoted $X\sim Y$, if $X\simeq\shift^i Y$ for some $i\in\bbz$.
We write 
$\id_A(X)<\infty$ when $X$ has a bounded semi-injective resolution;
and $\pd_A(X)<\infty$ means that $X$ has a bounded  semi-free resolution.
When $A$ is local and $X$ is homologically finite, the \emph{Poincar\'e series} of $X$ is
$P^A_X(t):=\sum_{i\in\bbz}\len_k(\Tor[A]ikX)t^i$
where $k=A/\m_A$.

\section{Semidualizing DG Modules}\label{sec131230a}

\begin{convention*}
In this section, $A$ is a homologically degree-wise noetherian DG $R$-algebra.
\end{convention*}

This section contains some useful DG variations of standard results for semidualizing complexes over rings.
We begin with the following definitions  from~\cite{christensen:dvke,frankild:ddgmgdga}.

\begin{defn}\label{defn131230a}
A \emph{semidualizing} DG $A$-module is a homologically finite DG $A$-module $X$ such that
the natural homothety morphism $R\xra{\chi^X_A}\Rhom[A]XX$ is an isomorphism in $\catd(A)$.
When $A$ is a ring concentrated in degree 0,  these are ``semidualizing $A$-complexes''.
The set of shift-isomorphism classes (in $\catd(A)$) of semidualizing DG $A$-modules is denoted $\s(A)$. 
A \emph{dualizing} DG $A$-module is a semidualizing DG $A$-module $D$ such that for every homologically finite
DG $A$-module $M$
the complex $\Rhom[A]{M}{D}$ is homologically finite, and the natural morphisms
$M\to\Rhom[A]{\Rhom[A]MD}D$
and
$\Lotimes[A]DM\to \Rhom[A]{\Rhom[A]MR}D$
are isomorphisms in $\catd(A)$. The DG algebra $A$ is \emph{Gorenstein} if $A$ is dualizing.
\end{defn}

Next, we summarize some facts about dualizing DG $A$-modules. 

\begin{fact}\label{lem130821a}
Assume that $R$ has a dualizing complex $D^R$ and $A^\natural$ is finitely generated as an $R$-module.
Then $\Rhom[R]{A}{D^R}$ is a dualizing DG $A$-module by~\cite[Proposition 2.7]{frankild:ddgmgdga}.
Since $\id_R(D^R)<\infty$, we also have  $\id_A(\Rhom[R]{A}{D^R})<\infty$.
In particular, every finite dimensional DG algebra over a field has a dualizing DG module of finite injective dimension.
\end{fact}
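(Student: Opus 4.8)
The plan is to take the dualizing assertion as given by~\cite[Proposition 2.7]{frankild:ddgmgdga} and to concentrate on the finite-injective-dimension statement, from which the ``in particular'' clause will follow immediately. The key structural observation is that, since $A$ is non-negatively graded and $A^\natural$ is a finitely generated $R$-module, $A$ is a bounded complex of $R$-modules, concentrated in degrees $0,\dots,n$ for some $n$. This boundedness is what lets us transport finiteness of injective dimension from $R$ to $A$.

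First I would use $\id_R(D^R)<\infty$ to fix a bounded semi-injective resolution $D^R\res J$ over $R$, so that $J$ is a bounded complex of injective $R$-modules and $\Rhom[R]{A}{D^R}\simeq\Hom[R]{A}{J}$ in $\catd(A)$. Boundedness of $\Hom[R]{A}{J}$ is then automatic from the boundedness of both $A$ and $J$, so it remains to check that $\Hom[R]{A}{J}$ is semi-injective as a DG $A$-module. For this I would invoke the restriction--coinduction adjunction, which for every DG $A$-module $N$ gives a natural isomorphism of complexes
\[
\Hom[A]{N}{\Hom[R]{A}{J}}\cong\Hom[R]{N}{J},
\]
where on the right $N$ is restricted along $R\to A$. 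When $N$ is acyclic its restriction is an acyclic DG $R$-module, and since $J$ is semi-injective over $R$ the right-hand complex is acyclic; thus $\Hom[A]{-}{\Hom[R]{A}{J}}$ preserves acyclicity. Combined with the fact that the underlying graded module $\Hom[R]{A^\natural}{J^\natural}$ is graded-injective over $A^\natural$ (coinduction along $R\to A$ sends injectives to injectives, as it is right adjoint to the exact restriction functor), this yields semi-injectivity of $\Hom[R]{A}{J}$ over $A$, whence $\id_A(\Rhom[R]{A}{D^R})<\infty$.

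I expect the semi-injectivity verification to be the only genuine obstacle: one must use the correct DG notion of semi-injectivity (so that the two conditions above really do suffice) and confirm that the displayed adjunction is an isomorphism of DG modules compatible with the differentials, not merely of the underlying graded objects. Everything else is routine bookkeeping. Finally, the ``in particular'' clause follows by taking $R=k$ a field and $D^R=k$, which is dualizing and satisfies $\id_k(k)=0$; then $A^\natural$ finite dimensional over $k$ is in particular a finitely generated $R$-module, and $\Rhom[k]{A}{k}\simeq\Hom[k]{A}{k}$ (no resolution being needed, as $k$ is injective over itself) is a dualizing DG $A$-module of finite injective dimension by the above.
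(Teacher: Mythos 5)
Your proposal is correct and matches the paper's (very terse) justification: the paper cites the same Proposition 2.7 of Frankild--Iyengar--J{\o}rgensen for the dualizing claim and simply asserts the transfer of finite injective dimension, which your coinduction argument --- $\Hom[R]{A}{J}$ for a bounded injective resolution $J$ of $D^R$ is a bounded DG $A$-module that is semi-injective because it is both K-injective (via the adjunction $\Hom[A]{N}{\Hom[R]{A}{J}}\cong\Hom[R]{N}{J}$) and graded-injective over $A^\natural$ --- fills in exactly as intended. The derivation of the ``in particular'' clause with $R=k$ and $D^R=k$ is likewise the paper's reading of that statement.
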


If $A$ is concentrated in degree 0, the next result is well known, and our proof is not surprising; see, e.g., 
\cite[(2.12) Corollary]{christensen:scatac}, 
\cite[2.9.1-2]{frankild:rbsc},
and~\cite[Corollary 3.3]{gerko:sdc}.

\begin{lem}\label{lem130821b}
Assume that $A$ has a dualizing DG module $D^A$ of finite injective dimension, and let $X$ be a semidualizing DG $A$-module. 
Then $\Rhom[A]X{D^A}$ is a semidualizing DG $A$-module such that $\Lotimes[A]{X}{\Rhom[A]X{D^A}}\simeq D^A$ in $\catd(A)$. 
If $R$ and $A$ are local and either $\pd_A(X)<\infty$ or  $\id_A(A)<\infty$, then $X\sim A$.
\end{lem}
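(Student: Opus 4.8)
The plan is to prove the global duality statement first and then specialize to the local situation. Throughout, write $\da{X}:=\Rhom[A]X{D^A}$. Since $D^A$ is dualizing, the DG module $\da{X}$ is homologically finite and the biduality morphism from Definition~\ref{defn131230a} (applied to $M=X$) gives $\Rhom[A]{\da X}{D^A}\simeq X$. The computational heart of the duality statement is the homomorphism-evaluation morphism
\[
\Lotimes[A]{X}{\Rhom[A]{X}{D^A}}\longrightarrow\Rhom[A]{\Rhom[A]{X}{X}}{D^A},
\]
which is an isomorphism in $\catd(A)$ because $X$ is homologically finite and $\id_A(D^A)<\infty$; this is precisely where the finite injective dimension hypothesis enters. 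Combining this with the semidualizing isomorphism $\Rhom[A]XX\simeq A$ and with $\Rhom[A]A{D^A}\simeq D^A$ yields $\Lotimes[A]X{\da X}\simeq D^A$. For the semidualizing property of $\da X$, I would apply $\Hom$-$\otimes$ adjunction together with the isomorphism just obtained:
\[
\Rhom[A]{\da X}{\da X}\simeq\Rhom[A]{\Lotimes[A]{X}{\da X}}{D^A}\simeq\Rhom[A]{D^A}{D^A}\simeq A,
\]
the last step because a dualizing DG module is in particular semidualizing.

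Next assume $R$ and $A$ are local with residue field $k=A/\m_A$. The key numerical tool is multiplicativity of Poincar\'e series: the base-change isomorphism $\Lotimes[A]{k}{(\Lotimes[A]{M}{N})}\simeq\Lotimes[k]{(\Lotimes[A]kM)}{(\Lotimes[A]kN)}$ and the K\"unneth formula over the field $k$ give $P^A_{\Lotimes[A]MN}(t)=P^A_M(t)\,P^A_N(t)$ for homologically finite $M,N$, where each $P^A_\bullet(t)$ is a Laurent polynomial in $t$ with nonnegative integer coefficients. I would then record the reduction step: if a semidualizing $X$ has Poincar\'e series equal to a single monomial $t^s$, then $X\sim A$. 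Indeed the minimal semifree resolution of $X$ has exactly $\len_k\Tor[A]ikX$ basis elements in degree $i$, so a monomial Poincar\'e series forces a single free generator in degree $s$, whence $X\simeq\shift^sA$. It will also be convenient to note that a product of two Laurent polynomials with nonnegative coefficients is a monomial only when both factors are monomials (compare lowest and highest degree terms).

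It then remains to produce a monomial Poincar\'e series in each of the two cases. If $\pd_A(X)<\infty$, then $X$ is perfect and the tensor-evaluation morphism $\Lotimes[A]{\Rhom[A]{X}{A}}{X}\to\Rhom[A]{X}{X}$ is an isomorphism, so $\Lotimes[A]{\Rhom[A]{X}{A}}{X}\simeq A$; taking Poincar\'e series gives $P^A_{\Rhom[A]XA}(t)\,P^A_X(t)=1$, forcing $P^A_X(t)$ to be a monomial and hence $X\sim A$. If instead $\id_A(A)<\infty$, then $A$ is itself a semidualizing DG module of finite injective dimension, hence dualizing, so by uniqueness of dualizing DG modules up to shift we have $D^A\sim A$ and thus $P^A_{D^A}(t)=t^s$ for some $s$; the factorization $P^A_{D^A}(t)=P^A_X(t)\,P^A_{\da X}(t)$ coming from $\Lotimes[A]X{\da X}\simeq D^A$ then again forces $P^A_X(t)$ to be a monomial, giving $X\sim A$.

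I expect the main obstacle to be twofold. First, one must establish the DG versions of the tensor- and homomorphism-evaluation isomorphisms with exactly the right finiteness hypotheses (homological finiteness of $X$ together with $\id_A(D^A)<\infty$, respectively $\pd_A(X)<\infty$). Second, and more delicate, in the duality half one must verify that the abstract isomorphism $\Rhom[A]{\da X}{\da X}\simeq A$ is actually realized by the canonical homothety morphism $\chi^{\da X}_A$, and not merely by some unspecified isomorphism; this is a naturality diagram chase through the evaluation and adjunction morphisms and is the most technical bookkeeping in the argument. A minor additional point is the implication $\id_A(A)<\infty\Rightarrow D^A\sim A$, which relies on uniqueness of dualizing DG modules in the local setting.
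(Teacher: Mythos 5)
Your proof of the duality statement coincides with the paper's own: the paper obtains $\Lotimes[A]{X}{\Rhom[A]X{D^A}}\simeq\Rhom[A]{\Rhom[A]XX}{D^A}\simeq D^A$ from exactly the hom-evaluation isomorphism you describe (citing \cite[Theorem 1]{apassov:hddgr}, which is where homological finiteness of $X$ and $\id_A(D^A)<\infty$ enter), and the ``technical bookkeeping'' you anticipate --- checking that the canonical homothety $\chi^{\Rhom[A]X{D^A}}_A$, and not merely some unspecified isomorphism, is invertible --- is precisely the commutative square the paper displays, with adjunction on the right and $\Rhom[A]{\zeta}{D^A}$ along the bottom. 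Your treatment of the case $\pd_A(X)<\infty$ (an evaluation isomorphism giving $\Lotimes[A]{\Rhom[A]XA}{X}\simeq A$, multiplicativity of Poincar\'e series, and the minimal-semi-free-resolution argument for ``monomial Poincar\'e series implies $X\sim A$'') is also the paper's argument in substance. The only genuine divergence is the case $\id_A(A)<\infty$, which the paper does not treat separately at all: the hom-evaluation morphism $\Lotimes[A]{X}{\Rhom[A]XA}\to\Rhom[A]{\Rhom[A]XX}{A}$ is an isomorphism under \emph{either} hypothesis, so the single identity $P^A_X(t)\,P^A_{\Rhom[A]XA}(t)=1$ disposes of both cases at once. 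Your alternative route through ``$A$ is Gorenstein, hence $D^A\sim A$'' is correct in substance but strictly heavier: it needs (i) that a semidualizing DG module of finite injective dimension is dualizing, which in the DG setting is \emph{not} a definition --- Definition~\ref{defn131230a} demands biduality and the tensor formula for all homologically finite $M$, and verifying these requires the very evaluation isomorphisms you would be using anyway --- and (ii) uniqueness of dualizing DG modules up to shift over local DG algebras, a genuine theorem of Frankild--Iyengar--J{\o}rgensen rather than the ``minor additional point'' you call it. So your case split is repairable (both inputs are available in the literature for this setting, since $A$ is automatically homologically bounded here), but it buys nothing over the paper's uniform one-line evaluation argument, which is simpler and self-contained.
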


\begin{proof}
By~\cite[Theorem 1]{apassov:hddgr}, we have
\begin{align*}
\Lotimes[A]{X}{\Rhom[A]X{D^A}}
&\simeq \Rhom[A]{\Rhom[A]XX}{D^A}
\simeq \Rhom[A]{A}{D^A}
\simeq D^A.
\end{align*}
Next, we have the following commutative diagram of DG $A$-module morphisms.
$$\xymatrix@C=15mm{
A\ar[d]_-{\chi^{D^A}_A}^-\simeq\ar[r]^-{\chi^{\Rhom[A]X{D^A}}_A}&\Rhom[A]{\Rhom[A]X{D^A}}{\Rhom[A]X{D^A}}\ar[d]^-\simeq
\\
\Rhom[A]{D^A}{D^A}\ar[r]_-{\simeq}^-{\Rhom[A]{\zeta}{D^A}}&\Rhom[A]{\Lotimes[A]{X}{\Rhom[A]X{D^A}}}{D^A}
}
$$
The morphism $\zeta$ is the isomorphism from the beginning of this proof.
It follows that  $\chi^{\Rhom[A]X{D^A}}_A$ is an isomorphism, so $\Rhom[A]X{D^A}$ is a semidualizing.

Assume that $R$ and $A$ are local and either  $\pd_A(X)<\infty$ or  $\id_A(A)<\infty$.
Then by~\cite[Theorem 1]{apassov:hddgr} we have the following isomorphisms in $\catd(A)$.
$$
\Lotimes[A]{X}{\Rhom[A]XA}\simeq \Rhom[A]{\Rhom[A]XX}{A}\simeq A.
$$
It follows that $P^A_X(t)P^A_{\Rhom[A]XA}(t)=1$, so we have $P^A_X(t)=t^d$ for some $d\in\bbz$.
From this, we conclude that the minimal semi-free resolution  $F\xra\simeq X$ has $F^{\natural}\simeq\shift^d A^\natural$.
From the Leibniz rule on $F$, one concludes that $X\simeq F\simeq\shift^dA$. 
\end{proof}

The next result compares to~\cite[A.3. Lemma (a)]{christensen:dvke}.

\begin{lem}\label{lem130825a}
Let $\underline t=t_1,\ldots,t_n$ be a sequence in the Jacobson radical of $R$,
and set $K=K^R(\underline t)$. 
Let $C$ be a DG $R$-algebra such that $C^\natural$ is finitely generated over $R$, and set $B=\Otimes KC$.
Then for each homologically finite DG $C$-module $X$ we have $X\in \s(C)$ if and only if $\Lotimes[C]BX\in \s(B)$.
\end{lem}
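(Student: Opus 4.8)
The plan is to reduce the biconditional to a single base-change isomorphism together with a Nakayama-type ``conservativity'' argument for the Koszul complex. Two structural observations drive everything. First, since $K^\natural$ is a finite free $R$-module and $B=\Otimes KC$, the underlying graded module $B^\natural$ is finite free over $C^\natural$; in particular, for every homologically finite DG $C$-module $M$, the DG $B$-module $\Lotimes[C]BM$ is again homologically finite, so $\Lotimes[C]BX$ is a legitimate candidate for membership in $\s(B)$. Second, for any DG $C$-module $M$ there is a natural identification $\Lotimes[C]BM\simeq\Otimes KM$ of DG $R$-modules, because $\Otimes[C]{(\Otimes KC)}M\cong\Otimes KM$ and $K$ is a bounded complex of finite free $R$-modules (so $\Otimes K-$ is exact and computes $\Lotimes K-$).

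First I would establish, for homologically finite $X$, the base-change isomorphism
$$\Lotimes[C]B{\Rhom[C]XX}\res\Rhom[B]{\Lotimes[C]BX}{\Lotimes[C]BX}$$
in $\catd(B)$. Conceptually this combines the adjunction $\Rhom[B]{\Lotimes[C]BX}W\simeq\Rhom[C]XW$ (with $W$ a DG $B$-module restricted along $C\to B$), applied to $W=\Lotimes[C]BX$, with the tensor-evaluation isomorphism $\Rhom[C]X{\Lotimes[C]BX}\simeq\Lotimes[C]B{\Rhom[C]XX}$; the latter holds because $X$ is homologically finite and $B$ is a bounded complex of finite free $C$-modules, as in~\cite[Theorem 1]{apassov:hddgr}. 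Concretely, one fixes a semifree resolution $P\res X$ over $C$, notes that $\Otimes[C]BP\res\Lotimes[C]BX$ is a semifree resolution over $B$, and checks summand-by-summand that $\Otimes[C]B{\Hom[C]PP}\to\Hom[B]{\Otimes[C]BP}{\Otimes[C]BP}$ is an isomorphism. The compatibility I must then verify is that, under this isomorphism and the identification $\Lotimes[C]BC\simeq B$, the homothety $\chi^{\Lotimes[C]BX}_B$ corresponds to $\Lotimes[C]B{\chi^X_C}$; this is a routine diagram-chase of the canonical maps.

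Granting the above, membership in both sets is controlled by a single morphism: $\Lotimes[C]BX\in\s(B)$ if and only if $\chi^{\Lotimes[C]BX}_B$ is an isomorphism, if and only if $\Lotimes[C]B{\chi^X_C}$ is an isomorphism, whereas $X\in\s(C)$ if and only if $\chi^X_C$ is an isomorphism. So it suffices to show that, for $u=\chi^X_C\colon C\to\Rhom[C]XX$, the base change $\Lotimes[C]Bu$ is an isomorphism in $\catd(B)$ if and only if $u$ is an isomorphism in $\catd(C)$. The implication from $u$ is immediate. For the converse, set $W=\cone(u)$, so that $\Lotimes[C]Bu$ is an isomorphism exactly when $\Lotimes[C]BW\simeq 0$, i.e.\ $\Otimes KW\simeq 0$.

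The crux, and the step I expect to be the main obstacle, is the conservativity: $\Otimes KW\simeq 0$ must force $W\simeq 0$. The key point is that $W$ is homologically bounded below with degreewise finitely generated homology: $\Rhom[C]XX$ is bounded below since $X$ is homologically finite, and its homology is finitely generated over $\HH_0(C)$ (hence over $R$) because $C$ is homologically degree-wise noetherian and $C^\natural$ is finitely generated over $R$. Given this, I would argue that $W\simeq 0$ by ruling out a bottom homology: if $i=\inf W$ were finite, the edge isomorphism of the spectral sequence $\mathrm H_p(\underline t;\HH_q(W))\Rightarrow\HH_{p+q}(\Otimes KW)$ gives $\HH_i(\Otimes KW)\cong\HH_i(W)/(\underline t)\HH_i(W)$, and since $\HH_i(W)$ is finitely generated and nonzero while $t_1,\dots,t_n$ lie in the Jacobson radical of $R$, Nakayama's lemma forces $\HH_i(W)/(\underline t)\HH_i(W)\neq 0$, contradicting $\Otimes KW\simeq 0$. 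Note that only the bottom homology is used, so boundedness of $W$ above is never needed; the delicate inputs are exactly the bounded-below/finite-generation claim for $W$ and the edge computation of $\HH_{\inf}(\Otimes KW)$.
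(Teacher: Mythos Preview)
Your overall strategy matches the paper's: reduce to showing that $\chi^{\Lotimes[C]BX}_B$ is an isomorphism if and only if $\Otimes K{\chi^X_C}$ is, and then argue that tensoring with $K$ detects whether $\chi^X_C$ is an isomorphism. The paper packages the first reduction into an explicit commutative diagram rather than invoking adjunction and tensor-evaluation separately, but the content is the same.

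There is, however, a genuine gap in your conservativity step. You assert that $W=\cone(\chi^X_C)$ is homologically bounded below because $\Rhom[C]XX$ is, but this is false in general. Take for instance $C=R=k[x]/(x^2)$ concentrated in degree $0$ (so $C^\natural$ is certainly finitely generated over $R$) and $X=k$; then $\HH_{-i}(\Rhom[C]XX)=\Ext iXX\cong k$ for every $i\geq 0$, so $\inf W=-\infty$ and your edge-of-spectral-sequence argument at the bottom degree never gets started. Working from the top does not save it either: at $\sup W$ the edge term is the $(\underline t)$-annihilator of $\HH_{\sup W}(W)$, which need not be nonzero.

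The paper avoids this by using only that $C$ and $\Rhom[C]XX$ are homologically degreewise finite over $R$, and then appeals to a ``routine mapping cone argument.'' Concretely, one argues by induction on $n$: for $n=1$, the complex $\Otimes{K^R(t_1)}W$ is the mapping cone of $t_1\colon W\to W$, so its acyclicity means multiplication by $t_1$ is bijective on each $\HH_i(W)$; since $\HH_i(W)$ is finitely generated over $R$ and $t_1\in\Jac(R)$, Nakayama forces $\HH_i(W)=0$ for every $i$. For $n>1$, write $K=\Otimes{K^R(t_n)}{K^R(t_1,\ldots,t_{n-1})}$, note that $\Otimes{K^R(t_1,\ldots,t_{n-1})}W$ again has degreewise finitely generated homology, and apply the one-variable case and induction. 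Replacing your bounded-below claim with this argument closes the gap; everything else in your outline is fine.
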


\begin{proof}
Consider the following commutative diagram of chain maps.
$$\xymatrix{
B\ar[r]^-=\ar[d]_-{\chi^{\Lotimes[C]BX}_B}
&\Otimes KC\ar[d]^-{\Otimes[]K{\chi^X_C}}\\
\Rhom[B]{\Lotimes[C]BX}{\Lotimes[C]BX}\ar[d]_-\simeq
&\Otimes K\Rhom[C]XX\ar[dd]^-{\simeq}\\
\Rhom[C]X{\Lotimes[C] BX}\ar[d]_-= \\
\Rhom[C]X{\Lotimes[C]{(\Otimes KC)}X}\ar[r]^-\simeq
&\Rhom[C]X{\Lotimes KX}}$$
It follows that $\chi^{\Lotimes[C]BX}_B$ is an isomorphism
if and only if $\Otimes[]K{\chi^X_C}$ is an isomorphism.
Since $C$ and $\Rhom[C]XX$ are homologically degree-wise finite over $R$, we conclude (say, from a routine mapping cone
argument) that $\Otimes[]K{\chi^X_C}$ is an isomorphism
if and only if 
$\chi^X_C$ is an isomorphism.
\end{proof}

\section{Vanishing of Tor over Trivial Extensions of DG Algebras}\label{sec130815b}

\begin{convention*}
In this section, $(R,\m,k)$ is a local ring.
\end{convention*}

The point of this section is to prove that Tor-vanishing over certain trivial extensions of DG algebras implies finite projective dimension;
see Theorem~\ref{thm130816b}. We begin with a useful construction.

\begin{prop}\label{prop130820a}
Let  $(A,A_+)$ be a local DG $R$-algebra.
Let $X$ be a homologically finite DG $A$-module. Then there exists a short exact sequence
$$0\to X'\xra\alpha L\xra\pi \wti X\to 0$$
of morphisms of DG $A$-modules such that $L$ is semi-free with a finite semibasis and we have $\wti X\simeq X$
and $\im(\alpha)\subseteq A_+ L$.
\end{prop}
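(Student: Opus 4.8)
The plan is to read the sequence off a minimal semi-free resolution of $X$, splitting its semibasis into a finite ``stage zero'' and everything else. First I would fix a minimal semi-free resolution $\varepsilon\colon F\res X$; since $A$ is local and $X$ is homologically finite, such an $F$ exists with $\partial^F(F)\subseteq A_+F$ and with a degreewise finite, bounded-below semibasis $E$. I then write $E=E_0\sqcup E'$, where $E_0$ is a minimal set of cycles in $F$ whose homology classes generate $\HH(X)$ over $\HH(A)$ (the stage-zero basis elements), and $E'$ is the remainder. Because $\HH(X)$ is finitely generated over $\HH(A)$, graded Nakayama shows $E_0$ is finite, and minimality of $E_0$ records that no class is a redundant $A_+$-multiple.

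Next I would set $L:=F\langle E_0\rangle$ and $W:=F\langle E'\rangle$, the DG submodules generated by $E_0$ and $E'$. Since the elements of $E_0$ are cycles forming part of a semibasis, $L$ is semi-free on the finite semibasis $E_0$, with $L^\natural\cong\bigoplus_{e\in E_0}\shift^{|e|}A^\natural$; this delivers the required finiteness. As $E_0\cup E'=E$ generates $F$ we have $L+W=F$, so the composite $\pi\colon L\hookrightarrow F\twoheadrightarrow F/W=:\wti X$ is surjective, and I take $\alpha$ to be the inclusion $X':=\ker\pi=L\cap W\hookrightarrow L$. By construction this yields the short exact sequence $0\to X'\xra\alpha L\xra\pi\wti X\to 0$, and it remains to verify the two substantive properties.

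For $\im\alpha\subseteq A_+L$ I would use minimality: an element $l\in L\cap W$ is a pure $E_0$-combination that also lies in $W$, so all its $E_0$-coordinates arise from the $\partial^F$-contributions of the $E'$-generators; since $\partial^F(F)\subseteq A_+F$, those coordinates lie in $A_+$, whence $l\in A_+L$. For $\wti X\simeq X$ it suffices, via the long exact homology sequence of $0\to W\to F\to\wti X\to 0$, to prove that $W$ is acyclic; the guiding principle is that in a minimal resolution all of $\HH(F)\cong\HH(X)$ is already carried by the stage-zero generators $E_0$, so nothing homological survives in the submodule generated by the higher semibasis.

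The acyclicity of $W$ (equivalently $F/W\simeq X$) is the main obstacle, and it is where the DG subtlety bites: unlike over a ring, $W$ need not be semi-free, and a naive degreewise truncation of $F$ is not even a DG submodule (positive-degree elements of $A$ raise degree and escape it), so I cannot split off a contractible complex degree by degree. I expect to handle it through the semibasis filtration $F(0)\subseteq F(1)\subseteq\cdots$: intersecting this filtration with $W$ exhibits $W$ as assembled from the higher stages, and a filtration/mapping-cone argument—using exactness of $\varepsilon$ together with minimality—should show that each adjoined stage of $W$ exactly kills the cycles created by the previous one, forcing $\HH(W)=0$. Checking that this bookkeeping is consistent across all (possibly infinitely many) stages, and that $L\cap W$ is computed correctly on the nose, is the technical heart of the construction.
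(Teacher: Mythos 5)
Your construction cannot get started in general: the decomposition $E=E_0\sqcup E'$ that everything rests on need not exist. In any semibasis the stage-zero elements are indeed cycles, but their homology classes need not generate $\HH(X)$, and in general \emph{no} semibasis of a minimal semi-free resolution contains cycles whose classes generate homology. Concretely, take $A=R=k[y]/(y^3)$ (a local DG $R$-algebra concentrated in degree $0$, with $A_+=\m=(y)$) and let
$$X\colon\quad 0\to Re_2\xra{\,y^2\,}Re_1\xra{\,y^2\,}Re_0\to 0.$$
This $X$ is semi-free (stages $0,1,2$) and minimal, hence is a minimal semi-free resolution of itself (and every minimal semi-free resolution of $X$ is isomorphic to it), and its semibases are exactly the sets $\{u_0e_0,u_1e_1,u_2e_2\}$ with the $u_i$ units. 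But $\HH_1(X)\cong(y)/(y^2)\neq 0$ and $\HH_2(X)=(y)e_2\neq 0$, while $\partial(u_ie_i)=u_iy^2e_{i-1}\neq 0$ for $i=1,2$: no positive-degree semibasis element is a cycle, and the class of a degree-$0$ element cannot generate $\HH_1$ or $\HH_2$ over $\HH(A)=R$. So no $E_0$ of the required kind exists. The property you are implicitly invoking --- that stage-zero generators represent a generating set of $\HH(X)$ --- holds for the resolution produced by the standard inductive construction, but that resolution is in general \emph{not} minimal (for this $X$ it is forced to adjoin a variable whose differential has a unit coefficient on a stage-zero generator), and your argument needs minimality essentially, above all for the verification $\im(\alpha)\subseteq A_+L$. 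You can have your $E_0$ or minimality, but in general not both; this is a fatal gap.

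Second, even in cases where the decomposition does exist, the one assertion carrying all the content --- $\wti X\simeq X$, equivalently $\HH(W)=0$ --- is exactly what you do not prove; you defer it to a hoped-for filtration/mapping-cone argument, so at its self-described ``technical heart'' the proposal is an outline rather than a proof. It is worth seeing how the paper sidesteps both problems by truncating in the \emph{degree} direction rather than the stage direction: with $s=\sup(X)$, it takes $L$ to be the $A$-span of the semibasis elements of degree $\leq s$. This is an $A$-submodule by definition and is closed under $\partial^F$ by the Leibniz rule, since $\partial$ strictly lowers degree and $A$ is non-negatively graded; note that your objection to degree truncation applies to the graded piece $\{f\in F:|f|\leq s\}$, not to this span, so you discarded the workable idea too quickly. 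Taking $\wti X=\tau_{\leq s}(F)$, the soft truncation, makes $\wti X\simeq X$ automatic from $\sup(X)=s$, so no acyclicity statement ever needs to be proved; surjectivity of $\pi$ and $\ker(\pi)\subseteq A_+L$ then follow from degree bookkeeping (in degrees $>s$ the module $L$ is generated in degrees $\leq s$, hence lies in $A_+L$) together with minimality of $F$ (in degree $s$ the kernel is $\im(\partial^F_{s+1})\subseteq A_+F$). To salvage your route you would need to (i) characterize when your $E_0$ exists and (ii) actually prove $\HH(W)=0$; the paper's construction requires neither.
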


\begin{proof}
Let $F\xra{\simeq} X$ be a minimal semi-free resolution of $X$, and let $E$ be a semi-basis for $F$. 
Let $F^{(p)}$ be the semi-free DG $A$-submodule of $F$ spanned by $E_{\leq p}:=\bigcup_{m\leq p}E_m$. 
To be clear, ${F^{(p)}}^{\natural}$ is the graded submodule of $F^{\natural}$ spanned by $E_{\leq p}$. Note that $\partial^F({F^{(p)}}^{\natural})\subseteq {F^{(p)}}^{\natural}$. To see this, by the Leibniz rule we have $\partial^F_{i+j}(te)=\partial^A_{i}(t)e+(-1)^{i}t\partial^F_{j}(e)$ for each $t\in A$ of degree $i$ and each $e\in F$ of degree $j$. If $j\leq p$, then the term $\partial^A_{i}(t)e$ is
in $\spn_A(E_{\leq p})$ by assumption, and the term $(-1)^{i}t\partial^F_{j}(e)$ is
in $\spn_A(E_{\leq p})$   by a degree argument.

Let $s=\sup(X)$, and set $\wti X=\tau_{\leq s}(F)$, the ``soft truncation'' of $F$. Note that the natural morphism 
$F\to\wti X$ is a surjective quasiisomorphism of DG $A$-modules, so we have $\wti X\simeq F\simeq X$.
Next, set $L=F^{(s)}$, which is semi-free with a finite semibasis $E_{\leq s}$.
Furthermore, the  composition $\pi$ of natural morphisms $L=F^{(s)}\to F\to \wti X$ is surjective
because the morphism $F\to\wti X$ is surjective,  the morphism $L\to F$ is surjective in degrees $\leq s$,
and we have $\wti X_i=0$ for all $i>s$. Thus, it remains to show that $X':=\ker(\pi)\subseteq A_+L$.

By construction, for $i\leq s$, the map $\pi_s$ is an isomorphism, so $X'_i=0\subseteq A_+L$, as desired. In degree $s+1$, we have
$X_{s+1}'=\im(\partial^F_{s+1})$, which is contained in $(A_+F)_s$ by construction.
Also, since $A_+$ consists of elements of positive degree, we have 
$$(A_+F)_s\subseteq A_+\spn_A(E_{<s})\subseteq A_+F^{(s)}.$$
Thus, we have $X_{s+1}'\subseteq A_+F^{(s)}$.
Lastly, for $i>s$, we have $X'_i=F^{(s)}_i=(A_+F^{(s)})_i$ by a degree argument, since $F^{(s)}$ is 
generated over $A$ in degrees $\leq s$.
\end{proof}

\begin{cor}\label{cor130820a}
Let $(A,A_+)$ be a bounded local DG $R$-algebra.
Let $X$ and $Y$ be  homologically finite DG $A$-modules of infinite projective dimension over $A$. 
Assume that $\Tor[A]{i\gg 0}{X}{Y}=0$.
Then there are homologically finite DG $A$-modules $X',Y'$ of infinite projective dimension over $A$
such that $\ann_A(A_+)X'=0=\ann_A(A_+)Y'$
and $\Tor[A]{i\gg 0}{X'}{Y'}=0$.
\end{cor}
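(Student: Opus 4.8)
The plan is to produce $X'$ and $Y'$ by applying Proposition~\ref{prop130820a} to $X$ and to $Y$, and then to check the four required properties one at a time. Applying the proposition to $X$ yields a short exact sequence $0\to X'\xra\alpha L\xra\pi\wti X\to 0$ of DG $A$-modules with $L$ semi-free on a finite semibasis, $\wti X\simeq X$, and $\im(\alpha)\subseteq A_+L$; applying it to $Y$ yields $0\to Y'\xra\beta M\xra\rho\wti Y\to 0$ with the analogous properties. These $X'$ and $Y'$ are the modules I would take.

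The annihilator condition is exactly where the containment $\im(\alpha)\subseteq A_+L$ is used. Since $\alpha$ identifies $X'$ with $\im(\alpha)$, and $\ann_A(A_+)\cdot A_+=0$ by definition, any $a\in\ann_A(A_+)$ satisfies $a\cdot(A_+L)=(aA_+)L=0$, hence $a\cdot\im(\alpha)=0$ and so $aX'=0$; thus $\ann_A(A_+)X'=0$, and symmetrically $\ann_A(A_+)Y'=0$. For homological finiteness I would use the triangle $X'\to L\to\wti X\to$: since $L$ is bounded with a finite semibasis it is homologically finite, and $\wti X\simeq X$ is homologically finite, so $X'$ is homologically finite, and likewise $Y'$. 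For infinite projective dimension, note that $L$ is a bounded semi-free DG module, so $\pd_A(L)<\infty$ (here boundedness of $A$ is used); as the objects of finite projective dimension form a thick subcategory and $\wti X\simeq X$ has infinite projective dimension, the same triangle forces $\pd_A(X')=\infty$, and similarly $\pd_A(Y')=\infty$.

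The remaining and least formal point is the transfer of Tor-vanishing, which I would handle with two long exact sequences. Tensoring $X'\to L\to\wti X\to$ with $Y'$ and using $\pd_A(L)<\infty$ to kill $\Tor[A]{i}{L}{Y'}$ for $i\gg 0$ gives isomorphisms $\Tor[A]{i}{X}{Y'}\cong\Tor[A]{i-1}{X'}{Y'}$ for $i\gg 0$, reducing the claim to showing $\Tor[A]{i}{X}{Y'}=0$ for $i\gg 0$. For this I would tensor $Y'\to M\to\wti Y\to$ with $X$: the term $\Tor[A]{i}{X}{M}$ vanishes for $i\gg 0$ since $\pd_A(M)<\infty$, while $\Tor[A]{i}{X}{\wti Y}\cong\Tor[A]{i}{X}{Y}=0$ for $i\gg 0$ by hypothesis, so the long exact sequence squeezes $\Tor[A]{i}{X}{Y'}$ to $0$ in that range.

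I expect the main obstacle to be purely organizational: keeping the two long exact sequences and their index shifts aligned so that the finite-projective-dimension terms and the hypothesized vanishing genuinely combine to force $\Tor[A]{i}{X'}{Y'}=0$ on a stable range of $i$. The one substantive idea is the appeal to $\im(\alpha)\subseteq A_+L$ from Proposition~\ref{prop130820a}, which is precisely what makes $\ann_A(A_+)$ act as zero on $X'$ and $Y'$.
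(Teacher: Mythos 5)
Your proposal is correct and follows essentially the same route as the paper's proof: apply Proposition~\ref{prop130820a} to each of $X$ and $Y$, verify the annihilator and infinite-projective-dimension conditions exactly as you do, and transfer the Tor-vanishing through two long exact sequences using $\pd_A(L),\pd_A(M)<\infty$. The only (immaterial) difference is the order of replacement --- you pass from $(X,Y)$ to $(X,Y')$ and then to $(X',Y')$, while the paper passes through $(X',Y)$ --- which is the same argument up to symmetry.
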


\begin{proof}
By Proposition~\ref{prop130820a}, there are short exact sequences
\begin{gather*}
0\to X'\xra\alpha L\to \wti X\to 0 \\
0\to Y'\xra\beta M\to \wti Y\to 0 
\end{gather*}
of morphisms of DG $A$-modules such that $L$ and $M$ are semi-free with a finite semibases, and $\wti X\simeq X$,
$\wti Y\simeq Y$,
$\im(\alpha)\subseteq A_+ L$,
and $\im(\beta)\subseteq A_+ M$.
The condition $\im(\alpha)\subseteq A_+ L$ implies that
$$\ann_A(A_+)X'\cong \ann_A(A_+)\im(\alpha)\subseteq \ann_A(A_+)A_+ L=0$$
and similarly  $\ann_A(A_+)Y'=0$.
By assumption, $\pd_A(L)<\infty$, so the condition $\pd_A(\wti X)=\pd_A(X)=\infty$ implies that $\pd_A(X')=\infty$,
because of the exact sequence $0\to X'\to L\to \wti X\to 0$. 
And similarly  $\pd_A(Y')=\infty$.
Finally, since $A$ is bounded and $\pd_A(L)$ is finite, the fact that $Y$ is homologically bounded implies that
$\Tor[A]{i\gg 0}{L}{Y}=0$.
By assumption, we have $\Tor[A]{i\gg 0}{\wti X}{Y}\cong\Tor[A]{i\gg 0}{X}{Y}=0$
so the above exact sequence implies that $\Tor[A]{i\gg 0}{X'}{Y}=0$.
Similarly, we deduce $\Tor[A]{i\gg 0}{X'}{Y'}=0$, as desired.
\end{proof}

Compare the next two results to~\cite[Lemma 3.2 and Theorem 3.1]{nasseh:oeire}.

\begin{lem}\label{lem130815a}
Let 
$(B,\m_B,k)$ be a local DG $R$-algebra.
Set $A=B\ltimes \shift^n k$ for some $n\geq 0$,
and let $\m_A$ be the augmentation ideal of $A$. Let $x$ be a generator for the DG ideal $0\oplus \shift^n k\subseteq A$.
Let $X$, $Y$ be DG $B$-modules, i.e., DG $A$-modules such that $xX=0=xY$.
Then for all $i\in\bbz$ we have $R$-module isomorphisms
$$\Tor[A]{i}XY\cong \Tor[B]{i}XY\bigoplus\left(\bigoplus_{p+q=i-n-1}\Otimes[k]{\Tor[A]{p}Xk}{\Tor[B]{q}kY}\right).$$
\end{lem}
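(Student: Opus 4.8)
The plan is to exhibit $Y$, in $\catd(A)$, as the cone of an explicit self-map built from the square-zero ideal, and then to split the triangle obtained by applying $\Lotimes[A]X-$. Fix a semi-free resolution $G\res Y$ over $B$ and set $M=\Otimes[B]AG$, which is semi-free over $A$ and satisfies $M\simeq\Lotimes[B]AY$. Let $W=xM$ be the DG $A$-submodule cut out by the square-zero ideal $0\oplus\shift^nk$; because this ideal sits in the single degree $n$, the Leibniz rule gives $\partial(xm)=\pm x\partial(m)$, so $W$ is indeed a DG submodule, $A$ acts on it through $A\to k$, and there are identifications $W\cong\shift^n(\Otimes[B]kG)$ and $M/W\cong G\simeq Y$. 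Thus $0\to W\xra{\iota}M\to G\to 0$ is a degreewise-split short exact sequence, giving a distinguished triangle
$$W\xra{\iota}M\to Y\xra{+1}$$
in $\catd(A)$. Since $\Otimes[B]kG$ is a complex of $k$-vector spaces it is formal, so $W\simeq\bigoplus_q\shift^{n+q}\Tor[B]qkY$ in $\catd(A)$.

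Applying the triangulated functor $\Lotimes[A]X-$ and identifying the two outer terms, associativity of the derived tensor product gives $\Lotimes[A]XM\simeq\Lotimes[B]XY$, with homology $\Tor[B]iXY$, while the fact that $W$ is built from $k$ gives $\Lotimes[A]XW\simeq\shift^n\big((\Lotimes[A]Xk)\otimes_k(\Lotimes[B]kY)\big)$, whose degree-$i$ homology is $\bigoplus_{p+q=i-n}\Tor[A]pXk\otimes_k\Tor[B]qkY$. These are exactly the two summands in the statement; everything now rests on showing that the triangle splits.

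The crux—and the step I expect to be the main obstacle—is to prove that $\Lotimes[A]X\iota$ is not merely zero on homology but is the zero morphism in $\catd(R)$, for then the triangle splits and yields $\Lotimes[A]XY\simeq(\Lotimes[B]XY)\oplus\shift(\Lotimes[A]XW)$; taking homology gives the claimed $R$-module isomorphism, the two suspensions (the $\shift^n$ inside $W$ and the $\shift$ from the triangle) producing the index condition $p+q=i-n-1$. The point is that $\iota$ is essentially multiplication by $x$: every element of $W$ has the form $xm$. Representing $\Lotimes[A]X-$ by an $A$-semi-free resolution $P\res X$ and using $\Otimes[A]PM\cong\Otimes[B]PG$ together with the quasiisomorphism $\Otimes[B]PG\res\Otimes[B]XG$, one computes that the composite chain map $\Otimes[A]PW\to\Otimes[B]XG$ carries $\rho\otimes xm$ into $x\cdot(\Otimes[B]XG)$, which is $0$ because $xX=0$. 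Hence this composite is the zero chain map and $\Lotimes[A]X\iota=0$ in $\catd(R)$. This is exactly where the hypothesis $xX=0$ is used, and it is the upgrade from ``zero on homology'' to ``zero in $\catd(R)$'' that turns the change-of-rings long exact sequence into the asserted direct sum.
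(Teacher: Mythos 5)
Your proof is correct, and while it rests on the same underlying idea as the paper's---exploit the square-zero ideal $xA$, use $xX=0$ to split a distinguished triangle after applying $\Lotimes[A]X{-}$, and finish with base change and K\"unneth over $k$---the execution is genuinely different. The paper never resolves $Y$: it takes a semi-free resolution $L\res B$ over $A$, lifts the projection $p\colon A\to B$ to a morphism $\wti p\colon A\to L$, and uses the fact that $\Otimes[A]Xp$ is an isomorphism (because $\Otimes[A]XB\cong X/xX=X$) to produce a left inverse of $\Lotimes[A]Xp$ in $\catd(B)$; this splits $\Lotimes[A]XB\simeq(\Lotimes[A]XA)\oplus\shift^{n+1}(\Lotimes[A]Xk)$ once and for all, independently of $Y$, and the formula then follows by applying $\Lotimes[B]{-}Y$ to this decomposition. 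You instead tensor the sequence $0\to xA\to A\to B\to 0$ with the $B$-semi-free resolution $G$ of $Y$ first, getting $0\to W\to M\to G\to 0$, and you split the resulting triangle by showing its \emph{first} map vanishes in $\catd(R)$, rather than by constructing a retraction of the second. These are the two equivalent criteria for a triangle $U\xra{f}V\xra{g}C\to\shift U$ to split ($f=0$ versus $g$ a split monomorphism), applied to triangles related by $\Otimes[B]{-}G$; your upgrade from ``zero on elements'' to ``zero in $\catd(R)$,'' obtained by composing the representative $\Otimes[A]P\iota$ with the quasiisomorphism $\Otimes[B]PG\res\Otimes[B]XG$ and observing that the composite lands in $x\cdot(\Otimes[B]XG)=0$, is a clean substitute for the paper's homotopy-lifting argument. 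What the paper's order of operations buys is a $Y$-free splitting statement and isomorphisms in $\catd(B)$ (slightly more structure than $\catd(R)$, though the lemma only claims $R$-module isomorphisms); what yours buys is that the asymmetry of the answer---$\Tor[A]pXk$ against $\Tor[B]qkY$---is transparent from the outset, since $X$ is resolved over $A$ and $Y$ over $B$.
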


\begin{proof}
Let $s\colon L\res B$ be a semi-free resolution over $A$.
Let $p\colon A\to B$ be the natural surjection, and let $\wti p\colon A\to L$ be a lift of $p$.
Hence the following diagram of morphisms of DG $A$-modules
$$\xymatrix{
A\ar[r]^-{\wti p}\ar[rd]_-p &L\ar[d]^<<<s_<<<\simeq \\
&B}$$
commutes up to homotopy.
Apply $\Otimes[A]{X}{-}$ to obtain the next diagram of morphisms of DG $B$-modules
$$\xymatrix{
\Otimes[A]{X}{A}\ar[r]^-{\Otimes[]{X}{\wti p}}\ar[rd]_-{\Otimes[]{X}{p}}^-\cong &\Otimes[A]{X}{L}\ar[d]^{\Otimes[]{X}{s}} \\
&\Otimes[A]{X}{B}}$$
that commutes up to homotopy. Note that $\Otimes[]{X}{p}$ is an isomorphism since $xX=0$.
Also, the chain map $\Otimes[A]{X}{\wti p}$ represents the morphism $\Lotimes[A]{X}{p}\colon\Lotimes[A]{X}{A}\to\Lotimes[A]{X}{B}$
in $\catd(B)$. It follows that $\Lotimes[A]{X}{p}$ has a left-inverse in $\catd(B)$,
so we have the first isomorphism (in $\catd(B)$) in the next sequence:
\begin{align*}
\Lotimes[A]{X}{B}
&\simeq\left(\Lotimes[A]{X}{A}\right)\bigoplus\left(\Lotimes[A]{X}{(\shift xA)}\right) \\
&\simeq\left(\Lotimes[A]{X}{A}\right)\bigoplus\left(\Lotimes[A]{X}{(\shift^{n+1}k)}\right)\\
&\simeq\left(\Lotimes[A]{X}{A}\right)\bigoplus\left(\shift^{n+1}\Lotimes[A]{X}{k}\right).
\end{align*}
The second isomorphism is from the assumption $\m_Ax=0$.
(Note that this isomorphism is in $\catd(A)$, hence also in $\catd(B)$ via the split injection $B\to A$.)
Now, we apply $\Lotimes[B]{-}{Y}$ to these isomorphisms to conclude that
\begin{align*}
\Lotimes[A]{X}{Y}
&\simeq\Lotimes[B]{(\Lotimes[A]{X}{B})}{Y}\\
&\simeq\left(\Lotimes[B]{(\Lotimes[A]{X}{A})}{Y}\right)\bigoplus\left(\shift^{n+1}\Lotimes[B]{(\Lotimes[A]{X}{k})}{Y}\right)\\
&\simeq\left(\Lotimes[B]{X}{Y}\right)\bigoplus\left(\shift^{n+1}\Lotimes[k]{(\Lotimes[A]{X}{k})}{(\Lotimes[B]{k}{Y})}\right).
\end{align*}
Apply $\HH_i(-)$ to obtain the first isomorphism in the next sequence:
\begin{align*}
\Tor[A]{i}{X}{Y}
&\cong\Tor[B]{i}{X}{Y}\bigoplus\HH_i\left(\shift^{n+1}\Lotimes[k]{(\Lotimes[A]{X}{k})}{(\Lotimes[B]{k}{Y})}\right)\\
&\cong\Tor[B]{i}{X}{Y}\bigoplus\HH_{i-n-1}\left(\Lotimes[k]{(\Lotimes[A]{X}{k})}{(\Lotimes[B]{k}{Y})}\right)\\
&\cong\Tor[B]{i}{X}{Y}\bigoplus\left(\bigoplus_{p+q=i-n-1}\Otimes[k]{\HH_p(\Lotimes[A]{X}{k})}{\HH_q(\Lotimes[B]{k}{Y})}\right)\\
&\cong\Tor[B]{i}{X}{Y}\bigoplus\left(\bigoplus_{p+q=i-n-1}\Otimes[k]{\Tor[A]p{X}{k}}{\Tor[B]q{k}{Y}}\right).
\end{align*}
The third isomorphism is from the K\"unneth formula.
\end{proof}

\begin{thm}\label{thm130816b}
Let 
$(B,B_+,k)$ be a bounded local  DG $R$-algebra.
Set $A=B\ltimes \shift^n k$ for some $n\geq 1$. Let $x$ be a generator for the DG ideal $0\oplus \shift^n k\subseteq A$.
Let $X$ and $Y$ be non-zero homologically finite 
DG $A$-modules such that $\Tor[A]{i\gg 0}XY=0$. Then $\pd_A(X)<\infty$ or $\pd_A(Y)<\infty$.
\end{thm}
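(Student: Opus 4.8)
The plan is to argue by contradiction and reduce to the K\"unneth-type decomposition of Lemma~\ref{lem130815a}. Assume that $\pd_A(X)=\infty$ and $\pd_A(Y)=\infty$; the goal is to contradict $\Tor[A]{i\gg 0}XY=0$. First I would apply Corollary~\ref{cor130820a}: since $A=B\ltimes\shift^n k$ is bounded and local and both $X,Y$ have infinite projective dimension over $A$, the corollary yields homologically finite DG $A$-modules $X',Y'$, still of infinite projective dimension over $A$, with $\Tor[A]{i\gg 0}{X'}{Y'}=0$ and $\ann_A(A_+)X'=0=\ann_A(A_+)Y'$.

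Next I would observe that the generator $x$ of $0\oplus\shift^n k$ lies in $\ann_A(A_+)$. Indeed, the trivial-extension multiplication together with the fact that $B_+$ annihilates $k$ gives $A_+x=0$ (equivalently $\m_A x=0$, as used in the proof of Lemma~\ref{lem130815a}), and graded-commutativity then yields $xA_+=0$. Hence $\ann_A(A_+)X'=0$ forces $xX'=0$, and likewise $xY'=0$, so $X'$ and $Y'$ are genuine DG $B$-modules. This is precisely the hypothesis needed to feed them into Lemma~\ref{lem130815a}.

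Applying that lemma to $X'$ and $Y'$ expresses $\Tor[A]i{X'}{Y'}$ as the direct sum of $\Tor[B]i{X'}{Y'}$ and the cross-term $\bigoplus_{p+q=i-n-1}\Tor[A]p{X'}k\otimes_k\Tor[B]q{k}{Y'}$. Since $\Tor[A]{i\gg 0}{X'}{Y'}=0$, each summand vanishes for $i\gg 0$; in particular the cross-term does. As these are $k$-vector spaces, every factor $\Tor[A]p{X'}k\otimes_k\Tor[B]q{k}{Y'}$ must be zero, so writing $P=\{p:\Tor[A]p{X'}k\neq 0\}$ and $Q=\{q:\Tor[B]q{k}{Y'}\neq 0\}$, the sumset $P+Q$ is bounded above.

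Finally I would read off the contradiction from projective dimension. Over the local DG algebra $A$ the Poincar\'e series of $X'$ records the dimensions $\dim_k\Tor[A]p{X'}k$, so $\pd_A(X')=\infty$ means $P$ is unbounded above. Meanwhile $Y'\neq 0$ is homologically finite over the local DG algebra $B$, so $\Lotimes[B]{k}{Y'}\not\simeq 0$ by DG Nakayama and $Q\neq\varnothing$; choosing $q_0\in Q$, the set $P+q_0\subseteq P+Q$ is then unbounded above, contradicting the boundedness of $P+Q$. Thus $\pd_A(X)$ and $\pd_A(Y)$ cannot both be infinite, which is the assertion. I expect the one genuinely delicate step to be the reduction of the first two paragraphs: one must arrange, simultaneously for $X$ and $Y$, that the socle element $x$ acts as zero (turning both into DG $B$-modules) while preserving both the infinite projective dimension and the eventual vanishing of Tor. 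After that, Lemma~\ref{lem130815a} and the elementary sumset argument do all the work.
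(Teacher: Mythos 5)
Your proof is correct and follows essentially the same route as the paper's: reduce by contradiction via Corollary~\ref{cor130820a} to the case where $x$ kills both modules, apply the decomposition of Lemma~\ref{lem130815a}, and contradict the infinite projective dimension of $X$ using Nakayama's Lemma to produce a nonvanishing $\Tor[B]{q_0}kY$. Your sumset formulation of the last step and your explicit check that $x\in\ann_A(A_+)$ are just slightly more detailed packagings of exactly the argument the paper gives.
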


\begin{proof}
Assume by way of contradiction that $\pd_A(X)=\infty=\pd_A(Y)$.
Thus, Corollary~\ref{cor130820a}
provides homologically finite DG $A$-modules $X',Y'$ of infinite projective dimension
such that $\ann_A(A_+)X'=0=\ann_A(A_+)Y'$
and such that $\Tor[A]{i\gg 0}{X'}{Y'}=0$.
Thus, we may replace $X$ and $Y$ by $X'$ and $Y'$ to assume that $\ann_A(A_+)X=0=\ann_A(A_+)Y$.
In particular, we have $xX=0=xY$.

Since $\Tor[A]{i\gg 0}XY=0$,
Lemma~\ref{lem130815a} implies that
\begin{equation}\label{eq131227a}
\bigoplus_{p+q=i-n-1}\Otimes[k]{\Tor[A]{p}Xk}{\Tor[B]{q}kY}=0
\end{equation}
for all $i\gg 0$.
The fact that $\pd_A(Y)$ is infinite implies that $Y\not\simeq 0$.
Since $Y$ is homologically finite, Nakayama's Lemma implies that there is an integer $q_0$ such that
$\Tor[B]{q_0}kY\neq 0$.
Thus, equation~\eqref{eq131227a} for $i\gg 0$ implies that $\Tor[A]{p\gg 0}Xk=0$,
contradicting the assumption $\pd_A(X)=\infty$.
\end{proof}

\section{The Number Of Semidualizing Complexes For Embedding Codepth 3}
\label{sec131230b}

\begin{convention*}
In this section, $(R,\m,k)$ is a local ring.
\end{convention*}

The purpose of this section is to prove Theorem~\ref{cor130816a} from the introduction, which we do in~\ref{proof131230a}.
Our main tool for this is the following result.

\begin{thm}\label{cor130821a}
Assume that $R$ admits a dualizing complex, and let 
$(B,B_+,k)$ be a bounded homologically finite local  DG $R$-algebra.
Set $A=B\ltimes W$ for some
non-zero finitely generated positively graded $k$-vector space $W$. 
Then $|\s(A)|\leq 2$.
\end{thm}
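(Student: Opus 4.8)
The plan is to show that every semidualizing DG $A$-module is shift-isomorphic to $A$ itself, except possibly for a single dualizing module, so that $|\s(A)|\leq 2$. The key structural feature to exploit is that $A=B\ltimes W$ is a trivial extension of DG algebras, where $W$ is a positively graded $k$-vector space, which puts us squarely in the setting of Theorem~\ref{thm130816b}. Since $W$ is a finite-dimensional positively graded $k$-vector space, I would first reduce to the case where $W\simeq\shift^n k$ for a single $n\geq 1$: one iterates the construction $B\ltimes\shift^n k$ over the graded pieces of $W$, or picks out the top (or bottom) nonzero graded component so that Lemma~\ref{lem130815a} and Theorem~\ref{thm130816b} apply directly. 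The hypothesis that $R$ admits a dualizing complex and that $A^\natural$ is finitely generated over $R$ (which follows from $B$ being bounded and homologically finite with $W$ finite-dimensional) lets me invoke Fact~\ref{lem130821a} to produce a dualizing DG $A$-module $D^A$ of finite injective dimension.

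The heart of the argument is the following. Let $X$ be any semidualizing DG $A$-module. The defining property gives $\Rhom[A]XX\simeq A$, hence $\Tor$-type vanishing: more precisely, I want to produce a pair of homologically finite DG $A$-modules with vanishing higher Tor to which Theorem~\ref{thm130816b} applies. The natural candidate is the pair $(X,\Rhom[A]X{D^A})$, using Lemma~\ref{lem130821b}, which tells us that $\Rhom[A]X{D^A}$ is again semidualizing and that $\Lotimes[A]X{\Rhom[A]X{D^A}}\simeq D^A$. Since $D^A$ has finite injective dimension and is homologically bounded, the complex $\Lotimes[A]X{\Rhom[A]X{D^A}}\simeq D^A$ is homologically bounded, which forces $\Tor[A]{i\gg0}X{\Rhom[A]X{D^A}}=0$. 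Now Theorem~\ref{thm130816b} applies to the pair $X$ and $Y:=\Rhom[A]X{D^A}$ (both non-zero, homologically finite) and yields $\pd_A(X)<\infty$ or $\pd_A(Y)<\infty$.

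With finiteness of projective dimension in hand, I would close the loop using the last sentence of Lemma~\ref{lem130821b}: since $A$ has a dualizing DG module $D^A$ of finite injective dimension and $X$ is semidualizing with $\pd_A(X)<\infty$, we conclude $X\sim A$. In the other case, $Y=\Rhom[A]X{D^A}$ is a semidualizing DG $A$-module with $\pd_A(Y)<\infty$, so the same lemma gives $Y\sim A$, and then applying $\Rhom[A]-{D^A}$ and the biduality isomorphism of the dualizing module yields $X\sim\Rhom[A]A{D^A}\simeq D^A$. Thus every semidualizing $X$ satisfies $X\sim A$ or $X\sim D^A$, giving $|\s(A)|\leq 2$.

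The main obstacle I anticipate is the reduction from a general finite-dimensional positively graded $W$ to the single-piece form $\shift^n k$ required by Theorem~\ref{thm130816b}: one must verify that the trivial extension $B\ltimes W$ can be built up (or analyzed) one graded summand at a time while preserving the hypotheses—boundedness, local structure, and homological finiteness—at each stage, and that the $\Tor$-vanishing passes through this filtration. A secondary technical point is confirming that $Y=\Rhom[A]X{D^A}$ is genuinely non-zero (so that Theorem~\ref{thm130816b} is applicable), which follows because $\Lotimes[A]XY\simeq D^A\not\simeq0$, and that the shift-isomorphism identifications in the second case interact correctly with the duality $\Rhom[A]-{D^A}$, using the biduality built into the definition of a dualizing DG module.
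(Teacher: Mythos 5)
Your proposal is correct and follows essentially the same route as the paper's proof: produce $D^A$ of finite injective dimension via Fact~\ref{lem130821a}, reduce to $W=\shift^n k$ by splitting off one summand (the paper writes $W=W'\oplus\shift^n k$ and $A\cong(B\ltimes W')\ltimes\shift^n k$), apply Theorem~\ref{thm130816b} to the pair $(X,\Rhom[A]X{D^A})$ using the Tor-vanishing from Lemma~\ref{lem130821b}, and conclude $X\sim A$ or, via biduality, $X\sim D^A$. Your extra checks (non-vanishing of $\Rhom[A]X{D^A}$ and preservation of the hypotheses under the trivial-extension decomposition) are exactly the points the paper leaves implicit.
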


\begin{proof}
Fact~\ref{lem130821a} implies that $A$ has a dualizing DG module $D^A$ of finite injective dimension.
Set $(-)^{\dagger}=\Rhom[A]{-}{D^A}$.
It suffices to show that for every semidualizing DG $A$-module $X$ we have $X\sim A$ or $X\sim D^A$.

Case 1: $W=\shift^n k$ for some $n\geq 1$. The isomorphism $\Lotimes[A]{X}{X^{\dagger}}\simeq D^A$
from Lemma~\ref{lem130821b} implies that $\Tor[A]{i\gg 0}{X}{X^{\dagger}}=0$. By Theorem~\ref{thm130816b} either $\pd_A(X)<\infty$ or $\pd_A(X^{\dagger})<\infty$. And by Lemma~\ref{lem130821b} we have $X\sim A$ or $X^{\dagger}\sim A$. If $X^{\dagger}\sim A$, then by 
definition of $D^A$
we have
$X\simeq X^{\dagger \dagger}\sim A^{\dagger}\simeq D^A$, as desired.

Case 2: General case.
Since $W\neq 0$, write $W = W' \oplus \shift^n k$ for some $n\geq 1$, and set $B' := B \ltimes W'$. It follows that $A \cong B' \ltimes \shift^n k$, 
so  the assertion follows from the previous case.
\end{proof}

\begin{disc}\label{disc131221a}
Let $\underline{t}=t_1,\cdots,t_n$ be a minimal generating set for $\mathfrak m$,
and consider the Koszul complex $K=K^R(\underline t)$.
From~\cite[(1.2)]{avramov:cslrec3} (or~\cite[(2.8)]{avramov:lgh}) there is a  finite-dimensional DG $k$-algebra $A$ that is
linked to $K$ by a sequence of quasiisomorphisms of DG algebras. 
As in~\cite[5.4 (Proof of Theorem A)]{nasseh:lrfsdc}, one has an injection $\s(R)\hookrightarrow\s(\comp R)$
and  bijections $\s(\comp R)\xra\cong\s(K)\xra\cong\s(A)$.

By definition
(or by a result of Golod~\cite{golod:hslr}), $R$ is Golod if and only if $A$ is of the form $k\ltimes W$ for some
finitely generated positively graded $k$-vector space $W$. 

Assume for the rest of this remark that $c=\ecodepth(R)\leq 3$.
Up to isomorphism, 
the algebra $A$ has $\partial^A=0$ and is in one of the classes described in the next table, copied from~\cite[1.3]{avramov:cslrec3}.
\begin{center}
\begin{tabular}{lllll}
\hline
Class & $A$&$B$&$C$&$D$ \\
\hline
$\mathbf{C}(c)$&$B$&$\bigwedge_k\shift k^c$\\
$\mathbf{S}$&$B\ltimes W$&$k$\\
$\mathbf{T}$&$B\ltimes W$&$C\ltimes\shift(C/C_{\geq 2})$&$\bigwedge_k\shift k^2$\\
$\mathbf{B}$&$B\ltimes W$&$C\ltimes\shift C_+$&$\bigwedge_k\shift k^2$\\
$\mathbf{G}(r)$&$B\ltimes W$&$C\ltimes\Hom[k]{C}{\shift^3k}$&$k\ltimes\shift k^r$\\
$\mathbf{H}(p,q)$&$B\ltimes W$&$\Otimes[k]CD$&$k\ltimes(\shift k^p\oplus\shift^2k^q)$&$k\ltimes\shift k$
\end{tabular}
\end{center}

\

\noindent Here, $W$ is a finitely generated positively graded $k$-vector space such that $B_+W=0$.
Note that if $R$ is not regular, then  $\sum_i(-1)^i\len_k(A_i)=\sum_i(-1)^i\len_k(\HH_i(K))=0$.
\end{disc}

\begin{para}[Proof of Theorem~\ref{cor130816a}] \label{proof131230a}
Continue with the notation from Remark~\ref{disc131221a}.
Recall that $\comp R$ has a dualizing complex $D^{\comp R}$, and that $R$ is Gorenstein if and only if $D^{\comp R}\sim \comp R$.
Hence, it suffices to show that $|\s(A)|\leq 2$.

Assume for this paragraph that $R$ is Golod. Then  we have
$A\cong k\ltimes W$ for some
finitely generated positively graded $k$-vector space $W$. 
If $W=0$, then $A\cong k$ which is a commutative local Gorenstein ring, hence $|\s(A)|=1$ in this case.
If $W\neq 0$, then $|\s(A)|\leq 2$ by Theorem~\ref{cor130821a}.

Assume for the rest of the proof that $c=\ecodepth(R)\leq 3$. We analyze the classes from Remark~\ref{disc131221a}.
Note that if $R$ is  Gorenstein, then $|\s(R)|=1$.
Also, if $W\neq 0$, then the conclusion $|\s(A)|\leq 2$ follows from Theorem~\ref{cor130821a}.
Thus, we assume for the rest of the proof that $W=0$.

If $R$ is in the class $\mathbf{C}(c)$, then $R$ is a complete intersection (hence Gorenstein), which has already been treated.
If $R$ is in the class $\mathbf{S}$, then $R$ is Golod, which has also already been treated.

(Class \textbf{T}) 
In this case the algebra $C$ has the form $0\to k\to k^2\to k\to 0$. In particular, $C_{\geq 2}=\shift^2k$. 
It follows that
$$\sum_i(-1)^i\len_k(B_i)=\sum_{i}(-1)^i\len_k(C_i)+\sum_{i<2}(-1)^{i-1}\len_k(C_i)= 1\neq 0.$$
Since we know that 
\begin{align*}
0&=\sum_i(-1)^i\len_k(A_i)\\
&=\sum_i(-1)^i\len_k(B_i)+\sum_i(-1)^i\len_k(W_i)\\
&=1+\sum_i(-1)^i\len_k(W_i)
\end{align*}
we have $W\neq 0$, which is a case we have already treated.

(Class \textbf{B}) 
As in the previous case, one has $W\neq 0$, which has already been~treated.

(Class \textbf{G}(r)) 
The DG $C$-module $\hom[k]{C}{\shift^3k}$ is dualizing, so the algebra 
$$A\cong B=C\ltimes\Hom[k]{C}{\shift^3k}$$ 
is Gorenstein 
by~\cite[Theorem 2.2]{jorgensen:rdc}.
It follows from~\cite[Theorem III]{frankild:ddgmgdga} that $A\simeq \Hom[k]{A}{\shift^3k}$.
Since $\Hom[k]{A}{\shift^3k}$ is bounded and semi-injective over $A$, we conclude that $\id_A(A)<\infty$.
Thus, we have $|\s(A)|=1$ by Lemma~\ref{lem130821b}.

(Class \textbf{H}(p,q)) 
We first note that $|\s(C)|\leq 2$. Indeed, if $p=q=0$, then we have $C=k$ so $|\s(C)|=1$, as above;
if $p\neq 0$ or $\q\neq 0$, then this follows from Theorem~\ref{cor130821a}. 
Hence, it remains to show that $|\s(B)|=|\s(C)|$.
For this, 
consider the map $\s(C)\to \s(B)$ defined by $X\mapsto\Lotimes[C]BX$. This is well-defined by Lemma~\ref{lem130825a},
and it is bijective by~\cite[Theorems 3.4 and 3.11]{nasseh:ldgm}. 
(This uses the fact that $D$ is isomorphic to the trivial Koszul complex $K^k(0)$.)
\qed
\end{para}

\section*{Acknowledgments}
We are grateful to Lucho Avramov for helpful conversations about this material.

\providecommand{\bysame}{\leavevmode\hbox to3em{\hrulefill}\thinspace}
\providecommand{\MR}{\relax\ifhmode\unskip\space\fi MR }
\providecommand{\MRhref}[2]{%
  \href{http://www.ams.org/mathscinet-getitem?mr=#1}{#2}
}
\providecommand{\href}[2]{#2}

\end{document}